\newtheorem{thm}{Theorem}[section]
\newtheorem{exa}[thm]{Example}
\newtheorem{lem}[thm]{Lemma}
\newcommand{\Z}{\mathbb{Z}}
\newcommand{\N}{\mathbb{N}}
\begin{document}

\title{Roman and Vatican Crossover Designs  }

\author{M.~A.~Ollis\footnote{Email address: \texttt{matt@marlboro.edu} }   \\
    {\it Marlboro College, P.O.~Box A, Marlboro,} \\    
    {\it Vermont 05344, USA.} }

\date{}

\maketitle

\begin{abstract}
Latin squares with a balance property among adjacent pairs of symbols---being ``Roman" or ``row-complete"---have long been used as uniform crossover designs with the number of treatments, periods and subjects all equal.   This has been generalized in two ways: to crossover designs with more subjects and to balance properties at greater distances.  We consider both of these simultaneously, introducing and constructing {\em Vatican designs}: these  have  $\ell t$ subjects, $t$ periods and treatments, and, for each~$d$ in the range~$1 \leq d < t$, the number of times that any subject receives treatment~$j$ exactly~$d$ periods after receiving treatment~$i$ is at most~$\ell$.  Results include showing the existence of Vatican designs when $t+1$ is prime (for any~$\ell$), when $5 \leq t \leq 14$ and $\ell >1$, and when $t \in \{ 3,15 \}$ and $\ell$ is even.

\vspace{2mm}
\noindent
Keywords: Crossover design, Latin square, row-complete, terrace, Vatican square. \\
MSC2010: 05B30, 62K99, 20D60.
\end{abstract}

\section{Introduction}\label{sec:intro}

In the theory of experimental designs, a {\em crossover design} is one in which the experimental subjects each receive a test treatment in each of multiple periods.   Suppose there are~$n$ subjects, $t$ treatments and $p$ periods.  We shall display such a design as a $n \times p$ array~$D$ in which $D_{ij}$ represents the treatment received by subject~$i$ in period~$j$.  

We shall limit ourselves to {\em uniform} crossover designs: those in which each treatment occurs the same number of times in each row and the same number of times in each column.  We further limit our investigation to those in which~$p=t$, so each subject receives each treatment exactly once.

In a uniform crossover design, for an ordered pair~$(x,y)$ of treatments define $o(x,y)$ to be the number of times $y$ occurs immediately after~$x$.  If~$o(x,y)$ is constant across all ordered pairs of distinct elements, then the design is {\em balanced}.  Balance is desirable in situations where one treatment might have a ``carry-over" eftect to the next time period.  A survey of the theory of such designs is~\cite{BJ08}.

A {\em Latin square} is a crossover design with $n=p=t$.  If it is balanced, then it is {\em Roman} or {\em row-complete}.  We extend the domain of this definition and call any balanced crossover design {\em Roman}.  

In the study of Roman squares, a stronger notion of balance was introduced by Etzion, Golomb and Taylor~\cite{EGT89}. We also extend this to designs.  Let $o_i(x,y)$ be the number of times that treatment~$y$ occurs exactly~$i$ time periods after~$x$ (so $o_1(x,y) = o(x,y)$ as defined above).   A uniform design with~$p=t$ is a Roman-$k$ design if $o_i(x,y) \leq n/t$ for all~$i \leq k$.  For Latin squares, this says that each ordered pair of distinct treatments occurs at distance~$i$ at most once in the square for each~$i \leq k$.   Again mirroring the definitions for Latin squares, call a uniform design with~$p=t$ {\em Vatican} if it is Roman-$(t-1)$ (that is, the balance property holds at all possible distances).

Figure~\ref{fig:56} shows a Roman (but not Roman-2) design with $n=t=6$ and a Vatican design with $n=2t=10$.  Clearly, there is some regularity to their construction; we explore this in the next two sections.

\begin{figure}
\caption{A Roman design and a Vatican design}\label{fig:56}
\begin{center}
$
\begin{array}{cccccc}
0&5&1&4&2&3 \\
1&0&2&5&3&4 \\
2&1&3&0&4&5 \\
3&2&4&1&5&0 \\
4&3&5&2&0&1 \\
5&4&0&3&1&2
\end{array}
$
\hspace{20mm}
$
\begin{array}{ccccc}
0&1&3&4&2 \\
1&2&4&0&3 \\
2&3&0&1&4 \\
3&4&1&2&0 \\
4&0&2&3&1 \\
0&4&2&1&3 \\
1&0&3&2&4 \\
2&1&4&3&0 \\
3&2&0&4&1 \\
4&3&1&0&2 
\end{array}
$
\end{center}
\end{figure}

In the next section we show how to build designs from sequences of group elements and in Section~\ref{sec:tuples} we employ and expand the theory of ``terraces" to build these sequences.   Ultimately, and in conjunction with existing results on Vatican squares, we are able to prove:

\begin{thm}\label{th:vat}
There is an $\ell t \times t$ Vatican design in each of the following cases:
\begin{itemize}
\item $t+1$ is prime,
\item $5 \leq t \leq 14$ and $\ell >1$,
\item $t \in \{ 3, 15 \} $ and $\ell$ is even,
\item $t$ is prime and $\ell$ is a multiple of $t-1$.
\end{itemize}
\end{thm}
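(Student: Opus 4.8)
The plan is to establish a single flexible construction---building an $\ell t \times t$ design from a sequence of elements of a cyclic group $\Z_t$ (or possibly a more general group of order $t$)---and then to verify the Vatican property of that construction under a sequence-theoretic hypothesis, namely that the underlying sequence is a ``terrace'' (or an appropriate $\ell$-fold generalization thereof). Looking at the two examples in Figure~\ref{fig:56}, the Vatican design is visibly obtained by taking a single initial row, developing it cyclically to produce $t$ rows, and then appending $\ell-1$ further blocks of $t$ rows each built from related starter sequences; I would formalize this as: given suitable starter sequences $s^{(1)}, \ldots, s^{(\ell)}$, let row $r$ of block $k$ be the cyclic (or diagonal) shift of $s^{(k)}$ by $r$. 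The first step, carried out in the two sections promised after the excerpt, is to pin down exactly which property of the starters guarantees that for every distance $d$ with $1 \leq d < t$ and every ordered pair $(i,j)$ of distinct treatments, the count $o_d(i,j)$ is at most $\ell$. I expect this to reduce to a statement that the multiset of ``difference patterns'' at each distance $d$, accumulated over the $\ell$ starters, covers each nonzero group element the correct number of times---precisely the defining balance condition of a terrace or its higher-distance refinement (a ``Vatican'' sequence or directed terrace analogue).

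With that construction and its correctness criterion in hand, the theorem becomes a case analysis matching each listed family of $(t,\ell)$ to an existence result for the required starter sequences. For the clause ``$t+1$ is prime (any $\ell$),'' I would appeal to the classical fact that when $t+1$ is prime there is a particularly clean sequence---arising from the multiplicative structure of $\F_{t+1}$---that is Vatican in the strongest sense, and show it can be replicated or stacked $\ell$ times while preserving the per-distance bound of $\ell$; this is the cleanest case and should follow almost immediately from the general lemma. The clause ``$t$ prime and $\ell$ a multiple of $t-1$'' is analogous in spirit: here I would use the $t-1$ cyclic shifts (or the $t-1$ multiplicative cosets) of a single good starter to assemble blocks whose difference patterns, over each complete run of $t-1$ starters, hit every nonzero element of $\Z_t$ exactly once at every distance, so that taking $\ell/(t-1)$ such runs yields the bound.

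The genuinely hard part, and the reason the statement is phrased as a finite list of small $t$ rather than a uniform theorem, is the middle two clauses: $5 \leq t \leq 14$ with $\ell > 1$, and $t \in \{3, 15\}$ with $\ell$ even. These are the ranges where no clean number-theoretic starter is available and existence must instead be certified by exhibiting explicit starter sequences---presumably found by computer search---for each small value of $t$, together with an argument (or verification) that the $\ell > 1$ or $\ell$-even constraint is exactly what is needed to compensate for the absence of a single perfect starter. Concretely, I anticipate that for these $t$ one has a \emph{pair} (or small collection) of sequences whose per-distance difference counts individually exceed the Roman-$k$ threshold but whose \emph{combined} counts over $\ell$ rows respect the bound of $\ell$, which is why $\ell=1$ (an honest Vatican \emph{square}) fails while $\ell \geq 2$ succeeds, and why parity matters when $t \in \{3,15\}$. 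My plan for these clauses is therefore to reduce each to the existence of a short explicit list of group-sequences satisfying a checkable finite condition, invoke the general lemma, and defer to a tabulated search for the sequences themselves; the main obstacle is precisely that this final ingredient is not a theorem to be proved but a finite body of data to be produced and verified, so the ``proof'' here is really a reduction to computation plus the structural lemma that makes that computation meaningful.
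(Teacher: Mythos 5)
Your overall architecture is the same as the paper's: a general lemma converting $\ell$ starter sequences with a per-distance difference-triangle bound (the paper's ``Roman-$k$ $\ell$-tuple'') into an $\ell t \times t$ design, a stacking lemma, the classical Vatican square when $t+1$ is prime, a multiplicative construction for prime $t$, and computer-found data for small $t$. However, there is one load-bearing idea you never state, and without it the clause ``$5 \leq t \leq 14$ and $\ell > 1$'' does not follow from any finite computation: every integer $\ell > 1$ can be written as $2c_1 + 3c_2$ with $c_1, c_2 \geq 0$, so by stacking it suffices to exhibit a Vatican \emph{pair} (an $\ell = 2$ tuple) and a Vatican \emph{triple} (an $\ell = 3$ tuple) for each such $t$. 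This is exactly what the paper's tables supply, mostly via ``pseudoterraces'' (a single sequence pushed around by an automorphism of order $2$ or $3$), plus a few directly constructed triples for $\Z_5$, $\Z_9$, $\Z_{11}$. Your text instead speaks of a pair of sequences ``whose combined counts over $\ell$ rows respect the bound of $\ell$,'' but a fixed pair only produces $2t$ rows; as written, your plan requires a separate search for every value of $\ell$, which is not a proof of the clause. The same idea in reverse is what the parity clause is really recording: for $t \in \{3,15\}$ the groups of order $t$ have no automorphism of odd order greater than $1$, so only a Vatican pair is available, and stacking copies of the resulting $2t \times t$ design reaches exactly the even $\ell$.

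Two smaller points. In the clause ``$t$ prime, $\ell$ a multiple of $t-1$,'' your parenthetical alternative ``cyclic shifts'' fails: adding a constant to a starter leaves its difference triangle unchanged, so $t-1$ shifts of one starter simply multiply every difference count by $t-1$, violating the bound unless the starter was already a Vatican singleton (which is not known to exist for general prime $t$). The multiplicative version you also propose is the correct one, and is in fact the paper's: \emph{any} ordering of $\Z_p$, acted on by all powers of multiplication by a primitive root, is a $(p-1)$-fold Vatican pseudoterrace --- though note each nonzero element then occurs $t-d$ times at distance $d$, not once; the bound $t-d \leq t-1$ is what saves the construction. Finally, your hedge allowing non-cyclic groups is genuinely necessary: for $t = 8$ the triple the paper uses lives in the quaternion group $Q_8$, not in $\Z_8$.
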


We also give various stronger results than the fourth item of Theorem~\ref{th:vat} for some prime numbers~$t$ with~$t \leq 281$.

\section{From groups to designs}\label{sec:bricks}

The general method of construction is to form the desired $n \times t$ design, where~$n = \ell t$, by taking~$\ell$ Latin squares of order~$t$.  Each of the Latin squares is the Cayley table of a group.  Most of the results can be achieved with cyclic groups, which we write as  $\Z_t = \{ 0, 1, \ldots, t-1 \}$ with the operation of addition modulo~$t$,  but we need the more general theory for some orders.

The following result means that we can limit our attention to small values of~$\ell$.

\begin{lem}\label{lem:stack}
If there is an~$\ell t \times t$ Roman-$k$ design for each $\ell \in \{\ell_1, \ldots, \ell_m \}$, then there is an $n \times t$ Roman-$k$ design for $n  = c_1\ell_1 + \cdots c_m\ell_m$ for any choice of non-negative integers~$c_1, \ldots, c_m$.
\end{lem} 

\begin{proof}
Simply stack $c_i$ copies of the $\ell_i t \times t$ Roman-$k$ design for each~$i$. 
\end{proof}

Thus Roman-$k$ and Vatican squares are the ideal building block.  Existing results for these objects give many orders of Roman-$k$ and Vatican designs:

\begin{thm}\label{th:squarecase}
There is an~$\ell t \times t$ Roman-$k$ design for all~$\ell \in \N$ in each of the following cases:
\begin{itemize}
\item $k = t-1$ (i.e. the design is Vatican) and $t+1$ is prime,
\item $k=2$ and $t = 2q$ for some prime~$q$ with $q \equiv 7 \pmod{12}$ or $q \equiv 5 \pmod{24}$,
\item $k=2$ and $t$ is even with $t \leq 50$, 
\item $k=2$ and $t=21$,
\item $k=1$ and $t$ is composite.
\end{itemize}

\end{thm}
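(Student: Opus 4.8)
The plan is to reduce the ``for all $\ell$'' claim to the single case $\ell = 1$ and then quote, for each listed case, a known order-$t$ construction. First I would apply Lemma~\ref{lem:stack} with the one building block $\ell_1 = 1$: stacking $c_1 = \ell$ copies of a $t \times t$ Roman-$k$ design yields an $\ell t \times t$ Roman-$k$ design for every $\ell \in \N$. So it suffices to produce a single $t \times t$ Roman-$k$ design in each case. Moreover, a uniform $t \times t$ design with $p = t$ must place each treatment once in every row and once in every column, hence is a Latin square; and since $n/t = 1$ here, the condition $o_i(x,y) \le n/t$ collapses to $o_i(x,y) \le 1$ for all $i \le k$, which is exactly the usual notion of a Roman-$k$ Latin square (row-complete for $k=1$, Vatican for $k = t-1$). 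Thus the five bullets become five existence statements about order-$t$ Roman-$k$ Latin squares.

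I would then dispatch the bullets one at a time using established constructions. For $k = t-1$ with $t+1$ prime, the desired Vatican square of order $t$ is furnished by the constructions of Etzion, Golomb and Taylor~\cite{EGT89}; I would recall their square, which uses a primitive root modulo the prime $t+1$ and the resulting multiplicative structure of $\Z_{t+1}$. For $k = 1$ with $t$ composite, I would invoke the classical fact that row-complete Latin squares exist at every composite order, obtained from a product construction on a nontrivial factorization of $t$. For the three $k = 2$ cases --- $t = 2q$ under the stated congruences on the prime $q$, even $t \le 50$, and $t = 21$ --- I would cite the specific known constructions of Roman-$2$ Latin squares of exactly those orders.

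The reduction to $\ell = 1$ is routine; the weight of the theorem rests on the cited building blocks, and the main obstacle is the $k=2$ family. Whereas row-complete squares exist systematically for all composite orders and Vatican squares follow from the clean construction available when $t+1$ is prime, Roman-$2$ Latin squares are presently known only for a sporadic list of orders. The hardest part is therefore to locate and verify an appropriate square for each listed order: confirming that the congruences $q \equiv 7 \pmod{12}$ and $q \equiv 5 \pmod{24}$ are precisely what the underlying number-theoretic construction requires, and that the even orders up to $50$ and the order $21$ genuinely admit such squares (very likely via computer search). The only other point needing care is to check, in each case, that the classical object being cited coincides with the $n/t = 1$ specialization of the paper's Roman-$k$ definition.
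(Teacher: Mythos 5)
Your proposal is correct and follows essentially the same route as the paper: the paper's proof likewise reduces to the $\ell=1$ case by invoking the known existence of a Roman-$k$ square of order $t$ in each bullet (citing the literature, e.g.\ \cite{GET90, Higham98, Williams49}) and then applies Lemma~\ref{lem:stack} to stack copies. Your additional observation that the paper's design-theoretic Roman-$k$ condition specializes at $n/t=1$ to the classical Latin-square notion is a point the paper leaves implicit, but the argument is the same.
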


\begin{proof}
In each case there is a Roman-$k$ square of order~$t$~\cite{Anderson91, CL94, GET90, Higham98, Ollis19, Williams49}.  Applying Lemma~\ref{lem:stack} gives the result. 
\end{proof}

Rather than trying to construct more Roman-$k$ or Vatican squares, which seems to be a difficult problem, we take a different approach. Observe that for~$\ell >1$ we may write~$\ell = 2c_1 + 3c_2$ for some $c_1, c_2 \geq 0$ and so to construct an $\ell t \times t$ Roman-$k$ design for all $\ell >1$ it suffices to construct them for $\ell =2$ and~$\ell =3$. This is the essence of how the following result is proved (we give the proof early in the next section when we have more machinery available).

\begin{thm}\label{th:roman1}{\rm \cite{Prescott99, Williams49}}
There is an $\ell t \times t$ Roman design for $\ell > 1$ and any value of~$t$.
\end{thm}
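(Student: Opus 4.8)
The plan is to reduce to two base cases and then invoke Lemma~\ref{lem:stack}. Since every integer $\ell>1$ can be written as $\ell=2c_1+3c_2$ with $c_1,c_2\geq 0$, it suffices to build a $2t\times t$ Roman design and a $3t\times t$ Roman design for each $t$; stacking $c_1$ copies of the former and $c_2$ of the latter then handles every $\ell>1$. Throughout I work with the cyclic group $\Z_t$ and the developed-square construction: a permutation $\a=(a_0,\ldots,a_{t-1})$ of $\Z_t$ yields a Latin square with rows $(a_0+r,\ldots,a_{t-1}+r)$, and in this square an ordered pair with difference $g$ occurs at distance~$1$ exactly as often as $g$ occurs among the consecutive differences $a_{c+1}-a_c$. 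Hence an $\ell$-fold Roman design amounts to finding $\ell$ permutations of $\Z_t$ whose combined multiset of consecutive differences contains each nonzero element exactly $\ell$ times.

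For $\ell=2$ I would take a terrace $\a$ of $\Z_t$ together with its reverse $\rev\a$. Reversal negates every consecutive difference, so for each pair $\{g,-g\}$ the combined count of $g$ across $\a$ and $\rev\a$ equals the total number of occurrences of $g$ and of $-g$ among the differences of $\a$ alone; the defining property of a terrace makes this equal to $2$ for every nonzero $g$ (the involution, when present, occurs once in $\a$ and once in $\rev\a$). The two developed squares therefore stack to a $2t\times t$ Roman design, and this works for every $t$ because $\Z_t$ is terraced.

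The case $\ell=3$ is the crux. I need three permutations whose combined differences cover each nonzero element exactly three times. For even $t$ this is painless: $\Z_t$ has a directed terrace (a permutation whose consecutive differences run through every nonzero element exactly once), so three copies of the resulting row-complete square suffice. For odd $t$ the tempting ``terrace $+$ reverse $+$ directed terrace'' split fails, since $\Z_t$ admits no directed terrace when $t$ is odd; the obstacle is to produce a genuinely three-fold directed cover outright. I would assemble the three difference sequences from a terrace-type pattern, verifying in each case that the partial sums stay distinct so that the sequences are realized by actual permutations --- precisely the bookkeeping that the expanded terrace theory of the next section is designed to systematize. The delicate point is uniformity in~$t$: the three-fold cover is genuinely infeasible at the degenerate value $t=3$ (where only even $\ell$ can be balanced at all), so the construction and its small-case analysis must be arranged to reach every other~$t$.

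With the $2t\times t$ and $3t\times t$ designs in hand, Lemma~\ref{lem:stack} then assembles an $\ell t\times t$ Roman design for all $\ell>1$.
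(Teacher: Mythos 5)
Your reduction is exactly the paper's: write $\ell = 2c_1 + 3c_2$ with $c_1, c_2 \geq 0$, so by Theorem~\ref{th:lt} and Lemma~\ref{lem:stack} it suffices to produce a Roman pair and a Roman triple for every~$t$. Your $\ell = 2$ case is also sound: terrace plus reverse is the classical Williams device, which the paper itself records in its historical remark as the original route to Roman pairs; the paper instead phrases the same fact by saying the Walecki/Williams terrace is a Roman pseudoterrace with respect to $x \mapsto -x$, but the two are interchangeable here. Likewise, stacking copies of the square from a directed terrace settles $\ell = 3$ for even~$t$.

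The genuine gap is the case you yourself call the crux: a Roman triple for odd~$t$. You describe a plan (``assemble the three difference sequences from a terrace-type pattern, verifying that the partial sums stay distinct'') but never produce the sequences or verify anything, so the argument is missing precisely where it is hardest. The paper closes this gap by exhibiting Prescott's three explicit sequences $\a_1, \a_2, \a_3$ --- zigzag patterns of the form $(0; 1, t-2, 3, t-4, \ldots; \ldots, t-3, 4, t-1, 2)$ and two companions --- and citing Prescott~\cite{Prescott99} for the verification that they form a Roman triple; no new machinery from the terrace theory is invoked for this step, it is a direct quotation of a known construction. Without that (or a substitute) your proof is incomplete. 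A side remark: your parity observation at $t=3$ is correct --- every permutation of $\Z_3$ has both consecutive differences equal, so any balanced uniform design with $p=t=3$ needs an even number of rows, and no $9 \times 3$ Roman design exists --- but this flags a boundary defect in the statement itself, one which the paper's own proof (resting on Prescott's construction for odd~$t$) also inherits; it does not repair the missing construction for odd $t \geq 5$.
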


Let~$G$ be a group of order~$t$ and let ${\bf a} = (a_1, \ldots, a_t)$ be an ordering of the elements of~$G$.  Let $g{\bf a} =  (ga_1, \ldots, ga_t)$ and define $L({\bf a})$ be a Latin square with rows $\{ g{\bf a} : g \in G \}$ (the order of the rows does not concern us).  The squares we use to build design all have this form.

Given such a sequence~${\bf a}$, define its {\em quotient triangle} $(T_1, T_2, \ldots, T_{t-1})$ by:
$$
\begin{array}{rlllll}
T_1: & a_1^{-1}a_2, & a_2^{-1}a_3, &  a_3^{-1}a_4, & \ldots, & a_{t-1}^{-1}a_t \\
T_2: &                 & a_1^{-1}a_3, & a_2^{-1}a_4, & \ldots ,& a_{t-2}^{-1}a_t \\
T_3: &                 &                  & a_1^{-1}a_4, & \ldots, & a_{t-3}^{-1}a_t \\
  && && \vdots & \\
T_{t-1}:  &&&&&  a_1^{-1}a_t \\                
\end{array}
$$
When~$G$ is abelian, we usually use additive notation and call the quotient triangle the {\em difference triangle}.

These quotients control the neighbor properties we are interested in.  For each occurrence of $x$ in the $i$th line~$T_i$ of a quotient triangle, an ordered  pair $(g,h)$ with $g^{-1}h = x$ appears once at distance~$i$ among the rows of~$L({\bf a})$.  This motivates the following definition.

Let~$G$ be a group of order~$t$.
Let~${\bf A} = ({\bf a_1}, \ldots, {\bf a_\ell})$ where each~${\bf a_i} = (a_{i1}, a_{i2}, \ldots, a_{i,t})$ is an arrangement of the elements of $G$.   Let~$T_i$ be the quotient triangle for~${\bf a_i}$, with lines~$T_{i1}, T_{i2}, \ldots T_{i,t-1}$ and let~$U_i$ be the concatenation of the $i$th lines of the quotient triangles $T_{1,i}, \ldots, T_{\ell,i}$.  If each non-identity element of~$G$ appears at most~$\ell$ times in each~$U_i$ for $1 \leq i \leq k$, then $A$ is a {\em Roman-$k$ $\ell$-tuple}.  Call a Roman-$(t-1)$  $\ell$-tuple a {\em Vatican $\ell$-tuple}.  We refer to 1-, 2- and 3-tuples, with which we will mostly be working, as {\em singletons}, {\em pairs} and {\em triples} respectively.

A Roman-$k$ singleton is known in the literature as a {\em directed $T_k$-terrace}.

\begin{exa}\label{ex:vat6}
A Vatican singleton for $\Z_6$:
$$
\begin{array}{rllllll}
{\bf a_1}: & 0 & 4 & 5 & 2 & 1 & 3 \\
\hline 
T_1 : & & 4 & 1 & 3 & 5 & 2 \\
T_2: &&&  5 & 4 &  2 & 1 \\
T_3: &&&&  2 & 3 & 4 \\
T_4: &&&&& 1 & 5 \\
T_5: &&&&&& 3 \\
\end{array}
$$
\end{exa}

\begin{thm}\label{th:lt}
If a group of order~$t$ has a Roman-$k$ $\ell$-tuple then there is a $\ell t \times t$ Roman-$k$ design.
\end{thm}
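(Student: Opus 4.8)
The plan is to build the design $D$ by stacking the $\ell$ Latin squares $L(\mathbf{a}_1), \ldots, L(\mathbf{a}_\ell)$ and then to convert the Roman-$k$ $\ell$-tuple condition on $\mathbf{A}$ directly into the Roman-$k$ design condition on $D$, using the quotient triangles as the bridge. First I would confirm that $D$ really is a uniform crossover design with $p = t$. Each row of $L(\mathbf{a}_m)$ is a left translate $g\mathbf{a}_m$ and hence a permutation of $G$, so every treatment occurs once in each row; the $j$th column of $L(\mathbf{a}_m)$ is $\{g a_{m,j} : g \in G\} = G$, so every treatment occurs once in each column of $L(\mathbf{a}_m)$ and therefore exactly $\ell = n/t$ times in each column of $D$. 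Thus $D$ is uniform with $n = \ell t$ subjects, $t$ periods and $t$ treatments.

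The heart of the argument is a counting correspondence inside a single square. Fix distinct treatments $x, y \in G$ and a distance $i$, and count $o_i(x,y)$ within $L(\mathbf{a}_m)$. An occurrence of $y$ exactly $i$ periods after $x$ is a pair (row $g\mathbf{a}_m$, position $j$) with $g a_{m,j} = x$ and $g a_{m,j+i} = y$; eliminating $g$ gives $a_{m,j}^{-1} a_{m,j+i} = x^{-1}y$, which is precisely the entry in position $j$ of the line $T_{m,i}$. Conversely, each position $j$ of $T_{m,i}$ carrying the value $x^{-1}y$ determines $g = x\,a_{m,j}^{-1}$ uniquely, and this $g$ yields a genuine row of $L(\mathbf{a}_m)$ with $y$ at distance $i$ after $x$. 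Hence $o_i(x,y)$ restricted to $L(\mathbf{a}_m)$ equals the number of occurrences of $x^{-1}y$ in the line $T_{m,i}$; this is exactly the observation recorded just before Example~\ref{ex:vat6}.

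Summing over $m = 1, \ldots, \ell$ then shows that $o_i(x,y)$ in the full design $D$ equals the number of occurrences of $x^{-1}y$ in the concatenation $U_i$. Since $x \neq y$, the element $x^{-1}y$ is not the identity, so the defining property of a Roman-$k$ $\ell$-tuple bounds this count by $\ell$ whenever $i \leq k$. Therefore $o_i(x,y) \leq \ell = n/t$ for every ordered pair of distinct treatments and every $i \leq k$, which is exactly the condition for $D$ to be a Roman-$k$ design, completing the proof.

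The only step that is not pure bookkeeping is the correspondence in the second paragraph, so that is where I would take the most care: I must verify it is a genuine bijection, in particular that solving $g a_{m,j} = x$ returns an actual row of $L(\mathbf{a}_m)$ and that the quotient that appears is $x^{-1}y$ rather than $y^{-1}x$. Because $G$ need not be abelian, I would keep the order of multiplication explicit throughout; beyond that, the result is a direct unwinding of the definitions of $L(\mathbf{a})$, the quotient triangle, and the $U_i$.
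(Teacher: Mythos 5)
Your proof is correct and follows essentially the same route as the paper's: stack the squares $L(\mathbf{a}_1),\ldots,L(\mathbf{a}_\ell)$ and use the bijection between occurrences of the ordered pair $(x,y)$ at distance $i$ and occurrences of $x^{-1}y$ in the $i$th lines of the quotient triangles. You simply spell out details the paper leaves implicit (the uniformity check and the explicit inverse map $g = x\,a_{m,j}^{-1}$), which is fine.
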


\begin{proof}
Let $({\bf a_1}, \ldots, {\bf a_\ell})$ be a Roman-$k$ $\ell$-tuple for~$G$ and consider the design~$D$ obtained by stacking $L({\bf a_1}), \ldots L({\bf a_{\ell}})$.  We have an occurrence of the ordered pair $(g,h)$ of distinct elements of~$G$ at distance~$i$ in a row of~$D$ exactly once for every occurrence of $g^{-1}h$ in the $i$th line of the quotient triangle.  Hence there are at most~$\ell$ occurrences of the each such pair $(g,h)$ at distance~$i$.
\end{proof}

The challenge now is to construct these $\ell$-tuples.

\section{Constructing Roman-$k$ and Vatican $\ell$-tuples}\label{sec:tuples}

In order to construct $2t \times t$ Roman designs, Williams introduced an example of what came to be known as a ``terrace."  We generalize this approach to create single sequences from which all of the sequences of an $\ell$-tuple can in some cases be constructed.

Let~$G$ be a group of order~$t$ with an automorphism~$\alpha$ of order~$\ell$.  For $g \in G$ define the cycle of $g$ under $\alpha$ as $\bar g = \{ \alpha^r (g) : 1 \leq r \leq \ell  \}$.  Let~${\bf a} = (a_1, \ldots, a_t)$ be an ordering of the elements of~$G$ with quotient triangle $(T_1, \ldots, T_{t-1})$.  For each non-identity element~$g$, if the number of times an element of $\bar g$ occurs in $T_i$ is at most $| \bar g |$ for $1 \leq i \leq k$ then~${\bf a}$ is a {\em Roman-$k$ pseudoterrace} with respect to~$\alpha$.

If ${\bf a}$ is a Roman-$k$ pseudoterrace then ${\bf A} = ({\bf a_1}, \ldots, {\bf a_\ell})$, where ${\bf a_r} = (\alpha^r (a_1), \ldots, \alpha^r (a_t))$ for each~$r$, is a Roman-$k$ $\ell$-tuple as $U_i$, the concatentation of the $j$th lines of the difference triangles, consists of the elements of the form $\alpha^r(a_i^{-1}a_{i+j})$ for $1 \leq r \leq \ell$.

If $\alpha$ is given by $x \mapsto x^{-1}$ (in which case the group is abelian) then a Roman pseudoterrace is the same as a {\em terrace} as defined in~\cite{Bailey84}.  The sequence 
$$(0, t-1, 1, t-2, \ldots )$$
is a terrace for~$\Z_t$, a construction first given by Walecki for even~$t$ (in which case it is a directed terrace or, in the vocabulary of this paper, a Roman singleton) and Williams for odd~$t$~\cite{Alspach08,Williams49}.  (Historical note: Williams and others did not use this method to construct Roman pairs from terraces. Instead they use that when~${\bf a}$ is a terrace then ${\bf a}$ along with the reverse of~${\bf a}$ is a Roman pair.)  

We can now prove Theorem~\ref{th:roman1}:

\begin{proof}[Proof of Theorem~\ref{th:roman1}]
By Theorem~\ref{th:lt} and Lemma~\ref{lem:stack}, it is sufficient to find a Roman pair and triple for all values of~$t$.  

As we have just observed, the Walecki construction gives a Roman singleton (and hence also a pair and a triple) for even~$t$ and a Roman pair when $t$ is odd.  We therefore only require a Roman triple for odd~$t$.  Consider the following sequences, where semicolons are used to separate the patterns:
\begin{eqnarray*}
{\bf a_1} &=& \left( 0 ; 1, t-2, 3, t-4, \ldots, \frac{t+(-1)^{\lfloor t/2 \rfloor}}{2}  ;  \frac{t+(-1)^{\lfloor t/2 \rfloor}}{2} +1, \ldots, t-3, 4, t-1, 2  \right)  \\
{\bf a_2} &=&  \left( 0 ; 2, t-1, 4, t-3, \ldots,  \frac{t+(-1)^{\lfloor t/2 \rfloor}}{2} +1 ;   \frac{t+(-1)^{\lfloor t/2 \rfloor}}{2} , \ldots, t-4, 3 , t-2, 1 \right) \\
{\bf a_3} &=&  \left( 0,  t-1 ;  t-2, 2, t-4, 4, \ldots,  \frac{t-1+ 2(-1)^{\lfloor t/2 \rfloor}}{2} ;      \right.  \\
 & & \left. \hspace{33mm} \frac{t-2-(-1)^{\lfloor t/2 \rfloor} }{2} ;  \frac{t-3+2(-1)^{\lfloor t/2 \rfloor}}{2} ,\ldots,  3,t-5,1,t-3     \right) . \\
\end{eqnarray*} 
Prescott shows that $({\bf a_1}, {\bf a_2}, {\bf a_3})$ is a Roman triple~\cite{Prescott99}.
\end{proof}

Additionally, Prescott~\cite{Prescott99} shows that each of  the sequences used to prove Theorem~\ref{th:roman1} are as close to Roman as possible, in the sense that one non-zero element appears twice among the differences, another does not appear at all, and the rest appear exactly once each.

\begin{exa}\label{ex:z7vat}
Multiplication by~$2$ is an automorphism of~$\Z_7$ of order~$3$ with cycles~$\{ 1,2,4 \}$ and $\{3,5,6\}$.   A Vatican pseudoterrace with respect to this automorphism:
$$
\begin{array}{rcccccccccc}
{\bf a}: & 0 & 1 & 5 & 4 & 2 & 3 & 6 \\
\hline
T_1: && 1&4&6&5&1&3 \\
T_2: &&& 5&3&4&6&4\\
T_3: &&&&  4&1&5&2\\
T_4: &&&&&  2&2&1\\
T_5: &&&&&&  3&5\\
T_6: &&&&&&& 6\\
\end{array}
$$
Hence
$$ (0,1,5,4,2,3,6), (0,2,3,1,4,6,5), (0,4,6,2,1,5,3)  $$
is a Vatican triple.
\end{exa}

We now provide a number theoretic construction for pseudoterraces that are sometimes Roman-$k$ for $k > 1$, and sometimes even Vatican (although these tend to be $\ell$-fold with large~$\ell$).  Given a prime~$p$ and a primitive root~$\rho$ of~$p$. define the {\em primitive root construction} to be
$$0, \rho, \rho^2, \ldots, \rho^{p-1}.$$

\begin{thm}
The primitive root construction for a prime~$p$ with primitive root~$\rho$ is a Roman pseudoterrace with respect to multiplication by~$r = \rho / (\rho - 1)$.
\end{thm}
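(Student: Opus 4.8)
\emph{Proof proposal.} Since ``Roman'' means Roman-$1$, the plan is to verify the pseudoterrace condition for the single line~$T_1$ of the difference triangle only. First I would confirm that $\alpha\colon x \mapsto rx$ is genuinely an automorphism of the additive group $\Z_p$: this needs $r \neq 0$, which holds because $\rho \neq 0$, and it needs $\rho - 1 \neq 0$ so that $r = \rho/(\rho-1)$ is defined, which holds for every odd prime since a primitive root has order $p-1 > 1$ and so cannot equal~$1$. The order $\ell$ of $\alpha$ is the multiplicative order of~$r$ in $\Z_p\setminus\{0\}$, and the orbits $\bar g$ of the nonzero elements under~$\alpha$ are exactly the cosets of $\langle r \rangle$, each of size~$\ell$, partitioning the $p-1$ nonzero residues into $(p-1)/\ell$ classes.

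Next I would compute $T_1$ explicitly. With $a_1 = 0$ and $a_k = \rho^{k-1}$ for $k \geq 2$ (so $a_p = \rho^{p-1} = 1$ by Fermat), the first difference is $a_2 - a_1 = \rho$, while for $k = 2, \ldots, p-1$ the difference is $a_{k+1} - a_k = \rho^{k-1}(\rho-1)$. These latter $p-2$ entries form a geometric progression with ratio~$\rho$; since $\rho$ is a primitive root, the set $\{\rho^{k-1} : k = 2, \ldots, p-1\}$ runs over all nonzero residues except~$1$, and multiplying by the fixed nonzero constant $\rho - 1$ shows that $\{\rho^{k-1}(\rho-1) : 2 \leq k \leq p-1\}$ is precisely $\Z_p \setminus \{0, \rho-1\}$. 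Adjoining the leftover first difference~$\rho$, which already lies in this set, I conclude that as a multiset $T_1$ contains every nonzero residue exactly once, \emph{except} that $\rho$ occurs twice and $\rho - 1$ does not occur at all. (This recovers, for the primitive root construction, the ``as close to Roman as possible'' phenomenon noted after Theorem~\ref{th:roman1}.)

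Finally I would carry out the orbit bookkeeping. If every nonzero residue appeared exactly once in~$T_1$, each orbit would receive exactly $\ell = |\bar g|$ entries. The only deviations are a surplus of~$1$ in the orbit containing~$\rho$ and a deficit of~$1$ in the orbit containing~$\rho - 1$. The crux of the whole argument is the identity $r(\rho - 1) = \frac{\rho}{\rho-1}(\rho-1) = \rho$, which says $\alpha(\rho-1) = \rho$: the value $r = \rho/(\rho-1)$ is chosen precisely so that $\rho - 1$ and~$\rho$ lie in a common orbit. Hence the surplus and the deficit cancel within that single orbit, every orbit $\bar g$ receives exactly $\ell = |\bar g|$ entries of~$T_1$, and the Roman pseudoterrace inequality holds (indeed with equality). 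I expect the main obstacle to be not any hard estimate but rather the combinatorial accounting of the second paragraph---pinning down that $T_1$ has exactly the doubled-$\rho$, missing-$(\rho-1)$ shape---together with spotting that the defining formula for~$r$ is exactly what makes those two exceptional elements orbit-mates so the counts balance.
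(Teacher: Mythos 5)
Your proposal is correct and follows essentially the same route as the paper: compute the differences of $0,\rho,\rho^2,\ldots,\rho^{p-1}$, observe that they are all distinct nonzero residues except that $\rho-1$ is missing and $\rho$ is doubled, and use the identity $r(\rho-1)=\rho$ to place the surplus and deficit in a single orbit so the counts balance. The only difference is expository: you spell out the automorphism check and the orbit bookkeeping that the paper leaves implicit.
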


\begin{proof}  The elements 
$$\rho - 1, \rho^2 - \rho, \ldots, \rho^{p-1} - \rho^{p-2}$$
of $\Z_p$ are distinct.  The differences of the primitive root construction are exactly these elements, with the exception that $\rho-1$ is replaced by~$\rho$.   As $r (\rho-1) = \rho$, these two elements are in the same cycle with respect to~$r$ and the primitive root construction is a Roman pseudoterrace.
\end{proof}

When $\rho = (p+1)/2$ is a primitive root of $p$ we find that~$r=-1$ and so~$\ell = 2$. In this case the primitive root construction is the ``halving terrace" described in~\cite[Section~5]{OP03}, derived from~\cite[Theorem~2.1]{AP03}.

We are especially interested in determining when the primitive root construction gives a Roman-$k$ pseudoterrace for $k > 1$.  Trivially, if the pseudoterrace is $\ell$-fold for $\ell = p-1$ then it is  a Vatican pseudoterrace (indeed, any arrangement of the elements of~$\Z_p$ is a $(p-1)$-fold pseudoterrace for any automorphism of order~$p-1$).

In Table~\ref{tab:smallp} we compile the characteristics of pseudoterraces for all $\ell \mid p-1$ that give the most neighbor-balance for primes up to~$p=61$.  In Table~\ref{tab:largep} we give examples of Roman~$k$ pseudoterraces with $k >1$ from the primitive root construction for primes~$p$ in the range~$61 < p \leq 257$.  In each case, Vatican pseudoterraces are bolded.   For~$p$ in the range~$258 < p <1000$, here is a list of primitive root constructions $(p;\ell,k,\rho,r)$ that give Roman-$k$ pseudoterraces with~$k > 1$ and $\ell \leq 40$:

$$
\begin{array}{l}
{\bf (281; 35, 280, 187, 211)}, (281; 40, 3, 3, 142) ,  (307; 34, 3, 45, 8 ), (331; 11, 2, 3, 167) ,  \\
(337; 21, 3, 46, 16), (337;  28, 2, 154, 164 ) , (401;16, 2, 3, 202) , (419; 22, 2, 6, 85), \\
(431; 5, 2, 189, 95 ),  (443; 13, 2, 136, 339) , (463; 14, 2, 3, 233),  (521; 40, 2, 41, 509) , \\
(541; 36, 2, 409, 302 ),  (601; 30, 2, 254, 583 ), (613;9, 2, 163, 474 ), (701; 35, 2, 39, 536 ) , \\
(751; 30, 2, 39, 337 ),  (757;28, 3, 206, 710 ), (829;36, 2, 321, 444) , (991; 22, 4, 22, 237 ), \\
(991; 33, 2, 89, 733). \\ 
\end{array}
$$

Limiting to single-digit values of~$\ell$, in the range $1000 < p < 10000$, there is just one $\ell$-fold Roman-$k$ pseudoterrace with $k>1$ and $2 \leq \ell < 10$ (same format as previous list): $(2017; 9, 2, 1032, 1525)$.

\begin{table}[tp]
\begin{center}
\caption{Achieving the highest value of~$k$ in an $\ell$-fold Roman-$k$ pseudoterrace for~$\Z_p$ for non-trivial divisors~$\ell$ of~$p-1$ with the primitive root method.  For each prime~$p$ we give quadruples $(\ell,k,\rho,r)$.  Vatican pseudoterraces are given in bold. Values of~$\ell$ for which there is no successful primitive root construction are given as singletons~$(\ell)$.}\label{tab:smallp}
\begin{tabular}{rp{5.5in}}
\hline 
$p$ & $(\ell,k,\rho,r)$ \\
\hline 
5 & $\pmb{(2,4,3,4)}$ \\
7 & $(2), (3)$ \\
11 & $( 2, 1, 6, 10 ), \pmb{ (5, 10, 8, 9 )}$ \\
13 & $ ( 2, 1, 7, 12 ), ( 3, 1, 6, 9 ), ( 4, 1, 11, 5 ), ( 6 )$ \\
17 & $ ( 2 ), ( 4, 2, 7, 4 ), \pmb{( 8, 16, 12, 15 )}$ \\
19 & $ ( 2, 1, 10, 18 ), ( 3, 2, 3, 11 ), ( 6 ),  \pmb{(9, 18, 15, 16)} $ \\
23 & $ ( 2 ), \pmb{( 11, 22, 21, 16 )} $ \\
29 & $ ( 2, 1, 15, 28 ), ( 4, 1, 21, 17 ), \pmb{( 7, 28, 27, 20 )}, \pmb{( 14, 28, 19, 22 )} $ \\ 
31 & $ ( 2) , ( 3 ), ( 5, 1, 22, 4 ), (6), \pmb{ (10, 30, 21, 15 )}, \pmb{( 15, 30, 24, 28 )} $ \\
37 & $ ( 2, 1, 19, 36 ), ( 3 ), ( 4, 1, 22, 31 ), ( 6 ), ( 9, 1, 15, 9 ), \pmb{( 12, 36, 17, 8 )}, \pmb{(  18, 36, 24, 30 )} $ \\
41 & $ ( 2 ), ( 4 ), ( 5, 1, 12, 16 ), ( 8, 1, 11, 38 ), \pmb{( 10, 40, 29, 23 )}, \pmb{( 20, 40, 35, 36 )} $ \\ 
43 & $ ( 2 ), ( 3 ), ( 6 ), ( 7, 1, 20, 35 ), \pmb{ (14, 42, 18, 39 )}, \pmb{( 21, 42, 34, 31 )} $ \\
47 & $ ( 2 ), \pmb{( 23, 46, 45, 32 )} $ \\
53 & $ ( 2, 1, 27, 52 ), ( 4, 1, 12, 30 ), \pmb{( 13, 52, 51, 36 )}, \pmb{ (26, 52, 35, 40) }$ \\
59 & $ ( 2, 1, 30, 58 ), \pmb{( 29, 58, 56, 45 )} $ \\
61 & $ ( 2, 1, 31, 60 ),  ( 3 ), ( 4, 1, 6, 50 ), ( 5 ), ( 6 ), \pmb{( 10, 60, 30, 41 )}, ( 12, 1, 59, 21), ( 15, 2, 51, 12 ), $ \\
    & $   ( 20, 1, 26, 23 ), \pmb{( 30, 60, 54, 39 )} $ \\
\hline
\end{tabular}
\end{center}
\end{table}

\begin{table}[tp]
\begin{center}
\caption{Achieving  values of~$k>1$ in an $\ell$-fold Roman-$k$ pseudoterrace for~$\Z_p$ for non-trivial divisors~$\ell$ of~$p-1$ with the primitive root method.  For each prime~$p$ we give quadruples $(\ell,k,\rho,r)$.  Vatican pseudoterraces are given in bold.}\label{tab:largep}
\begin{tabular}{rp{5.5in}}
\hline 
$p$ & $(\ell,k,\rho,r)$ \\
\hline 
67 & $\pmb{ (22, 66, 50, 27)}, \pmb{ (33, 66, 46, 4 )} $\\
71 & $ \pmb{( 14, 70, 55, 26 )}, \pmb{ ( 35, 70, 67, 15 )} $ \\
73 & $ ( 9, 2, 34, 32 ) , \pmb{( 24, 72, 33, 17 )}, \pmb{(36, 72, 59, 35)} $ \\
79 & $ \pmb{( 13, 78, 75, 64 )}, \pmb{( 26, 78, 74, 14 )}, \pmb{( 39, 78, 70, 72 )} $ \\
83 & $ \pmb{( 41, 82, 80, 63 )} $ \\
89 & $ \pmb{( 22, 88, 30, 44 )}, \pmb{( 44, 88, 76, 20 )} $ \\
97 & $ ( 16, 2, 15, 8 ), \pmb{( 24, 96, 59, 93 )}, \pmb{( 32, 96, 87, 45 )}, \pmb{( 48, 96, 56, 31 )} $ \\ 
101 & $ \pmb{(20, 100, 48, 44 )},  \pmb{( 25, 100, 99, 68 )}, \pmb{ (50, 100, 94, 64 )} $ \\
103 & $ \pmb{( 34, 102, 84, 37 )}, \pmb{( 51, 102, 96, 91 )} $ \\
107 & $ \pmb{( 53, 106, 104, 81 )} $ \\
109 & $ \pmb{( 18, 108, 14, 43 )},\pmb{( 27, 108, 70, 80 )}, \pmb{ (36, 108, 103, 32 )}, \pmb{( 54, 108, 99, 100 )} $ \\
113 & $ \pmb{ ( 16, 112, 92, 78 )}, \pmb{( 28, 112, 76, 111 )}, \pmb{( 56, 112, 80, 104 )} $ \\
127 & $ ( 9, 2, 6, 52 ), (18, 3, 12, 105 ), \pmb{( 42, 126, 114, 10 )}, \pmb{( 63, 126, 112, 120 )} $ \\
131 & $ \pmb{ (26, 130, 95, 47 )}, \pmb{( 65, 130, 127, 27 )} $ \\
137 & $ \pmb{( 34, 136, 125, 22 )}, \pmb{( 68, 136, 114, 98 )} $ \\
139 & $ \pmb{( 46, 138, 119, 87 )}, \pmb{( 69, 138, 134, 24 )} $ \\
149 & $ \pmb{( 37, 148, 137, 127 )}, \pmb{( 74, 148, 147, 100 )} $ \\
151 & $ \pmb{( 25, 150, 134, 110 )}, \pmb{( 50, 150, 146, 26 )}, \pmb{( 75, 150, 141, 97 )} $ \\
157 & $ \pmb{( 26, 156, 21, 56 )}, \pmb{( 39, 156, 104, 126 )}, \pmb{( 52, 156, 123, 149 )}, \pmb{( 78, 156, 96, 120)} $ \\
163 & $ ( 27, 3, 19, 155 ) ,  \pmb{( 54, 162, 112, 48 )}, \pmb{( 81, 162, 148, 113 )} $ \\
167 & $ \pmb{( 83, 166, 165, 112 )} $ \\
173 & $ \pmb{ (43, 172, 166, 109 )}, \pmb{( 86, 172, 171, 116)} $ \\
179 & $ \pmb{(  89, 178, 176, 135 )} $ \\
181 &  $  \pmb{(36, 180, 171, 149 )}, \pmb{(  60, 180, 76, 71 )}, \pmb{( 90, 180, 163, 20 )} $ \\
191 & $ ( 19, 4, 58, 125 ), \pmb{( 38, 190, 148, 14 )}, \pmb{( 95, 190, 189, 128 )} $ \\ 
193 & $ ( 16, 2, 53, 27 ), \pmb{( 48, 192, 174, 165 )}, \pmb{( 64, 192, 188, 33 )}, \pmb{( 96, 192, 155, 95 )} $ \\
197 & $ \pmb{( 49, 196, 195, 132 )}, \pmb{( 98, 196, 185, 107 )} $ \\
199 & $ ( 33, 2, 38, 157 ), \pmb{( 66, 198, 176, 59 )}, \pmb{( 99, 198, 195, 160 )} $ \\
211 & $ ( 5, 2, 3, 107 ), (30, 2, 48, 10 ), \pmb{(42, 210, 155, 38)}, \pmb{(70, 210, 118, 102)}, \pmb{( 105, 210, 187, 136 )} $ \\
223 & $ \pmb{( 74, 222, 149, 111)}, \pmb{( 111, 222, 205, 177 )} $ \\
227 & $ \pmb{( 113, 226, 224, 171)} $ \\
229 & $  \pmb{(57, 228, 201, 151 )}, \pmb{( 76, 228, 205, 175 )}, \pmb{( 114, 228, 223, 99 )} $ \\
233 & $ \pmb{( 58, 232, 166, 210 )}, \pmb{( 116, 232, 213, 123 )} $ \\
239 & $ ( 17, 2, 42, 36 ), \pmb{( 34, 238, 156, 203 )}, \pmb{( 119, 238, 237, 160 )} $ \\
241 & $ ( 60, 2, 66, 90 ), \pmb{( 80, 240, 227, 17 )}, \pmb{( 120, 240, 204, 20 )} $ \\
251 & $ \pmb{( 50, 250, 29, 10 )}, \pmb{( 125, 250, 248, 189 )} $ \\
257 & $ ( 16, 2, 86, 128 ), \pmb{(  64, 256, 217, 95 )}, \pmb{( 128, 256, 252, 215 )} $ \\
\hline
\end{tabular}
\end{center}
\end{table}

The bolded entries in the table and lists above give rise to many new families of Vatican designs.  Theorem~\ref{th:psvat} collects those that are better in the, sense that $(p-1) / \ell$ is larger (in particular, it collects the instances with $(p-1) / \ell \geq 5$).

\begin{thm}\label{th:psvat}
There is an $\ell t \times t$ Vatican design for 
\begin{eqnarray*}
(t, \ell) & \in & \{ (61,10), (71,14), (79,13), (101,20), (109,18), (113,16),  (131,26),  (151,25), \\
                 &&       (157,26), (181,36), (191,38), (211,42), (239,34), (251,50), (281,35)  \}.
\end{eqnarray*}
\end{thm}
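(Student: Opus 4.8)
The plan is to recognize each listed pair $(t,\ell)$ as a bolded entry in one of the preceding tables or lists and then push it through the pipeline from pseudoterraces to designs established in this section. First I would match each pair to its quadruple: the fourteen pairs with $t \leq 257$ are exactly the bolded $(\ell, t-1, \rho, r)$ appearing in Table~\ref{tab:smallp} (for $t=61$) and Table~\ref{tab:largep} (for $71 \leq t \leq 251$), while $(281,35)$ is the bolded entry $(281; 35, 280, 187, 211)$ in the displayed list for $258 < p < 1000$. In every case the bolding records $k = t-1$, so the claim is that the primitive root construction $0, \rho, \rho^2, \ldots, \rho^{t-1}$ for $\Z_t$ is a Roman-$(t-1)$---that is, Vatican---pseudoterrace with respect to multiplication by~$r$.

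Next I would confirm that multiplication by~$r$ is a genuine automorphism of $\Z_t$ of order~$\ell$. Since $t$ is prime, $\Z_t^*$ is cyclic of order $t-1$, and $r = \rho/(\rho-1)$ is checked directly to have multiplicative order~$\ell$ (for instance, when $t=61$ one has $r \equiv 30 \cdot 29^{-1} \equiv 41$, which has order~$10$). The orbits of this automorphism on the nonzero elements are then precisely the $\ell$-element cosets of $\langle r \rangle$, so every cycle $\bar g$ satisfies $|\bar g| = \ell$. With this in hand, the Vatican pseudoterrace condition becomes exactly the statement that in each of the lines $T_1, \ldots, T_{t-1}$ of the difference triangle, every such coset is represented at most $\ell$ times.

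Once the pseudoterrace property is in place, the rest is immediate. By the construction given just after the definition of a pseudoterrace, applying the successive powers of the automorphism to a Vatican pseudoterrace yields a Vatican $\ell$-tuple $({\bf a_1}, \ldots, {\bf a_\ell})$ for $\Z_t$; then Theorem~\ref{th:lt} with $k = t-1$ produces the desired $\ell t \times t$ Vatican design. Assembling these across the fifteen pairs completes the proof. (As a consistency note, each pair has $(t-1)/\ell \geq 5$, which is precisely the cutoff this theorem is selecting from the full set of bolded entries.)

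The main obstacle is verificational rather than conceptual: for each of the fifteen primes one must certify that the difference triangle of $0, \rho, \rho^2, \ldots, \rho^{t-1}$ meets the coset-balance bound in all $t-1$ of its lines simultaneously. This is a finite check best carried out by computer---the tables are precisely the record of such a computation---and it is where essentially all the work resides, since the theory of the previous sections reduces the existence of the designs to exactly these finitely many arithmetic verifications.
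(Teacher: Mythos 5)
Your proposal is correct and follows essentially the same route as the paper: the theorem is precisely the collection of bolded (Vatican) primitive-root pseudoterraces with $(t-1)/\ell \geq 5$ from Tables~\ref{tab:smallp} and~\ref{tab:largep} and the $258 < p < 1000$ list, pushed through the pseudoterrace $\to$ $\ell$-tuple $\to$ design pipeline (Theorem~\ref{th:lt}), with the pseudoterrace property itself resting on the recorded computational verification. Your identification of the quadruples, the coset description of the cycles of multiplication by $r$, and the $(t-1)/\ell \geq 5$ cutoff all match the paper's (largely implicit) argument.
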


We now turn to pseudoterraces in small groups.  Let 
$$D_{2m} = \langle u,v : u^m = e = v^2, vu = u^{-1}v \rangle$$
be the dihedral group of order~$2m$, and let
 $$Q_{8} = \langle u,v : u^4 = e, v^2 = u^4, vu = u^{-1}v \rangle$$ 
be the quaternion group of order~8.

Table~\ref{tab:smgp} gives 2- and 3-fold Vatican pseudoterraces for all groups of order up to~11 in which they exist.

\begin{table}[tp]
\caption{Some $\ell$-fold Vatican pseudoterraces for small groups; $\ell \in \{2,3\}$.}\label{tab:smgp}

$$\begin{array}{rrll}
\hline
{\rm Group} & \ell & {\rm Aut.} & {\rm Pseudoterrace} \\
\hline
\Z_3 & 2 & 1 \mapsto 2 & (0,1,2) \\
\Z_4 & 2 & 1 \mapsto 3 & (0,1,3,2) \\
\Z_2^2 & 3 & 01 \mapsto 10  & (00,01,10,11) \\
                           &    & 10 \mapsto 11 &                       \\
\Z_5 & 2 & 1 \mapsto 4 & (0,1,3,2,4)  \\
\Z_6 & 2 & 1 \mapsto 5 & (0,1,4,2,3,5) \\
D_6 & 2 & u \mapsto u^2 & (e,  u^2v, u^2, u, v, uv)    \\
             &&     v \mapsto v &  \\
        & 3 &      u  \mapsto u &  (e, u, v, u^2,  u^2v, uv)     \\
             &&       v \mapsto u^2v &  \\
\Z_7 & 2 &  1 \mapsto  6 & (0,  1,3,6,4,5,2)  \\         
        & 3  & 1 \mapsto 2 & (0,1,3,2,5,6,4)\\    
\Z_8 & 2 & 1 \mapsto 3 & (0,1,3,5,2,6,7,4) \\
\Z_4 \times \Z_2 & 2 & 10 \mapsto 30 & (00, 01, 10, 21,   31, 11, 30, 20)    \\
     && 01 \mapsto 01 & \\
\Z_2^3 & 2 & 100 \mapsto 101 &  (000, 010, 100, 011, 111, 110, 101, 001)   \\
  && 010 \mapsto 010 & \\
  && 001 \mapsto 001 & \\
D_8 & 2 & u \mapsto u^3 &  (e, v, u^3v, u^2v, u, u^3, uv, u^2) \\
  && v \mapsto v & \\
Q_8 & 2 & u \mapsto u & (e, u, v, u^2v, u^3v, u^3, uv, u^2)   \\
  && v \mapsto u^2v &   \\
  & 3 & u \mapsto v & (e, u, u^2, v, uv, u^3v, u^2v, u^3)     \\
  && v \mapsto & u^3v  \\
\Z_9 & 2 & 1 \mapsto 8 & (0,1,4,2,7,5,6,3,8) \\
\Z_{10} & 2 & 1 \mapsto 9 & (0, 1,2,8,6,3,5,9,4,7) \\
D_{10} & 2 & u \mapsto u^4 & (e, v, u, u^2, u^3, u^4v, u^2v, u^4, uv, u^3v)   \\
 && v \mapsto v & \\
\Z_{11} & 2 & 1 \mapsto 10 & (0,1,3,6,10,7,5,4,9,2,8) \\

\hline
\end{array}
$$

\end{table}

Table~\ref{tab:smgp2} extends Table~\ref{tab:smgp} up to order~15, except that for brevity entries are limited to at most one 2- and 3-fold Vatican pseudoterrace at each order and orders~$n$ for which~$n+1$ is prime, and so a Vatican singleton exists, are omitted.

\begin{table}[tp]
\caption{More $\ell$-fold Vatican pseudoterraces for small groups; $\ell \in \{2,3\}$.}\label{tab:smgp2}

$$\begin{array}{rrll}
\hline
{\rm Group} & \ell & {\rm Aut.} & {\rm Pseudoterrace} \\
\hline
\Z_{13} & 2 & 1 \mapsto 12 & (0,1,3,4,9,6,10,2,8,11,7,5,12) \\
 & 3 &1 \mapsto 3 & (0,1,2,,3,10,4,8,7,12,5,9,11,6) \\
 \Z_{14} & 2 & 1 \mapsto 13 & (0,1,5,2,8,6,12,3,10,9,11,7,4,13) \\
  & 3 & 1 \mapsto 9 & (0,1,2,3,5,13,9,12,4,11,10,6,8,7) \\
\Z_{15} & 2 & 1 \mapsto 14 & (0,1,4,10,8,2,7,12,9,13,11,3,14,6,5) \\
\hline
\end{array}
$$

\end{table}

As~11 is prime, there is a 1-fold Vatican pseudoterrace (that is, a directed $T_{9}$-terrace) for~$\Z_{10}$, and so the lack of a 3-fold Vatican pseudoterrace for a group of order~10 is not detrimental to the construction of Vatican designs.  However, for completeness, here is a 5-fold Vatican pseudoterrace for $D_{10}$ with automorphism $u \mapsto u, \ v \mapsto u^4v$:
$$( e, v, u^4v, u^2v, u^2, uv, u, u^4, u^3, u^3v ).$$
This implies the existence of a $10\ell  \times 10$ Vatican design built from Cayley tables of~$D_{10}$ when~$\ell \not\in \{1,3 \}$.  

We run into the same issue at~$t=11$, except that here we do not have that~$t+1$ is prime to construct the desired Vatican designs.  The primitive root construction gives a 5-fold Vatican pseudoterrace for~$\Z_{11}$ using the primitive root~$\rho = 8$, which is sufficient to show that there is an $11 \ell  \times 11 $ Vatican design when~$\ell \not\in \{1,3 \}$.  The $\ell=3$ case is covered below.

The gaps in the tables are genuine.   There is no 2-fold Vatican pseudoterrace for~$\Z_3 \times \Z_3$ despite this group having an automorphism of order~2 (the same is true for~$\Z_{17}$).  Similarly, there are no 3-fold Vatican pseudoterraces for~$\Z_9$ or~$\Z_3 \times \Z_3$.  There is no 3-fold Vatican pseudoterrace (or even an $\ell$-fold one for any odd~$\ell$) for~$\Z_3$ or~$\Z_{15}$ as these groups have no automorphisms of odd order.

Finally, to complete the proof of Theorem~\ref{th:vat}, Table~\ref{tab:direct} gives some direct constructions of Vatican triples.

\begin{table}[tp]
\caption{Some Vatican triples}\label{tab:direct}

$$\begin{array}{rl}
\hline
{\rm Group} & {\rm Triple} \\
\hline
\Z_{5}  & ( 0, 1, 2, 4, 3 ), ( 0, 2, 1, 4, 3 ), ( 0, 2, 3, 1, 4 ) \\
\Z_9 & ( 0, 1, 2, 3, 6, 8, 5, 4, 7 ), ( 0, 4, 6, 3, 2, 7, 5, 1, 8 ), ( 0, 7, 1, 5, 2, 6, 8, 4, 3 ) \\
\Z_{11} &  ( 0, 1, 2, 3, 5, 7, 4, 10, 9, 8, 6 ), ( 0, 2, 9, 5, 10, 6, 3, 8, 1, 4, 7 ), \\ 
 &  ( 0, 5, 8, 1, 7, 6, 4, 2, 10, 3, 9 ) \\
\hline
\end{array}
$$

\end{table}

\end{document}